\theoremstyle{plain}
\newtheorem{theorem}{Theorem}
\newtheorem{corollary}{Corollary}
\newtheorem{lemma}{Lemma}
\theoremstyle{definition}
\newtheorem{definition}{Definition}
\def\PP{\mathbb{P}^}
\def\sr{\mathrm{sr}}
\def\sbr{\mathrm{sbr}}
\date{}
\begin{document}

\baselineskip=17pt

\title[Unique Symmetric Decomposition]
{Unique decomposition for a polynomial of low rank}

\author[E. Ballico]{Edoardo Ballico}
\address{Dept. of Mathematics\\
  University of Trento\\
38123 Povo (TN), Italy}
\email{edoardo.ballico@unitn.it}

\author[A. Bernardi]{Alessandra Bernardi}
\address{Dipartimento di Matematica ``Giuseppe Peano'', Universit\`a degli Studi di Torino,
Via Carlo Alberto 10, 
I-10123 Torino, Italy
.}
\email{alessandra.bernardi@unito.it}

\date{}

\thanks{The authors were partially supported by CIRM of FBK Trento 
(Italy), Project Galaad of INRIA Sophia Antipolis M\'editerran\'ee 
(France),  Marie Curie: Promoting science (FP7-PEOPLE-2009-IEF), MIUR and GNSAGA of 
INdAM (Italy).}

\begin{abstract}Let $F$ be a homogeneous polynomial of 
degree $d$ in $m+1$ variables defined over an algebraically closed 
field of characteristic 0 and suppose that $F$ belongs to the 
$s$-th secant variety of the $d$-uple Veronese embedding
of $\mathbb{P}^m$ into  $ \PP {{m+d\choose d}-1}$ but that its 
minimal decomposition as a sum of $d$-th powers of linear forms requires more than $s$ addenda. We show that if $s\leq d$ then  $F$ can be uniquely written as $F=M_1^d+\cdots + M_t^d+Q$, where
$M_1, \ldots , M_t$ are linear forms with $t\leq (d-1)/2$, and $Q$ a binary form  such that $Q=\sum_{i=1}^q l_i^{d-d_i}m_i$ with $l_i$'s linear forms and $m_i$'s forms of degree $d_i$ such that $\sum (d_i+1)=s-t$.
\end{abstract}

\subjclass{15A21, 15A69, 14N15}
\keywords{Waring problem, Polynomial decomposition, Symmetric rank, 
Symmetric tensors, Veronese varieties, Secant varieties.}

\maketitle

\section*{Introduction}

In this paper we will always work with an algebraically closed field $K$ of characteristic 0. Let $X_{m,d}\subset \mathbb{P}^N$, with $m \ge 1$,  $d \ge 2$ and 
$N:= {{m+d}\choose m}-1$, be the classical Veronese variety obtained 
as the image of the $d$-uple Veronese embedding $\nu _d: \PP m \to \PP 
N$. The {\it $s$-th secant variety} $\sigma_s(X_{m,d})$ of 
Veronese variety $X_{m,d}$ is the Zariski closure in $\mathbb {P}^N$
of the union of all linear spans $\langle P_1, \ldots , P_s \rangle$ 
with $P_1, \ldots , P_s \in X_{m,d}$.
For  any point $P\in \PP N$, we indicate with $\sbr(P)$ the 
minimum integer $s$ such that $P\in \sigma_s(X_{m,d})$. This integer 
is called the {\it symmetric border rank} of $P$.

Since  $\mathbb{P}^m\simeq \mathbb{P}(K[x_0, \ldots , 
x_m]_1)\simeq \mathbb{P}(V^*)$, with $V$ an $(m+1)$-dimensional vector space over $K$, the generic element belonging to 
$\sigma_s(X_{m,d})$ is  the projective class of a form (a symmetric tensor) 
of type:
\begin{equation}\label{FT}
F=L_1^d+ \cdots + L_r^d, \; \; \; \;   (T=v_1^{\otimes d}+ \cdots 
+v_r^{\otimes d}).
\end{equation}
The minimum $r\in \mathbb{N}$ such that $F$  can be written as in (\ref{FT}) is the {\it symmetric rank} of $F$ and we denote it $\sr(F)$ ($\sr(T)$, if we replace $F$ with $T$).

The decomposition of a homogeneous polynomial that combines a minimum 
number of terms and that involves a minimum number of variables is a 
problem that is having a great deal of attentions not only from classical Algebraic Geometry (\cite{ah}, \cite{ik}, \cite{cc}, \cite{cmr}, \cite{lt}), but also from applications like Computational Complexity (\cite{ls}) 
and Signal Processing (\cite{vw}).
 
At the Workshop on Tensor Decompositions and Applications (September 13--17, 2010,
Monopoli, Bari, Italy), A. Bernardi presented a work in collaboration with E. Ballico where a possible structure of small rank homogeneous polynomials with border rank smaller than the rank was characterized (see \cite{bb1}).
It is well known that, if a homogeneous polynomial $F$ is such that $\sbr(F)<\sr(F)$, then there are infinitely many  decompositions of $F$ as in (\ref{FT}). Our purpose in \cite{bb1} was to find, among all the possible decompositions of $F$, a ``best" one in terms of number of variables. Namely: Does there exist a canonical choice of two variables  such that most of the terms involved in the decomposition (\ref{FT}) of $F$ depend only on those two variables? The precise statement of that result  is the following:
\\
\\
(\cite{bb1}, Corollary 1)
 Let $F\in K[x_0, \ldots , x_m]_d$ be such that
$\sbr(F)+\sr(F)\leq 2d+1$ and $\sbr(F)<\sr(F)$. Then there are an integer $t \ge 0$, linear forms $L_1, L_2,M_1, \ldots , 
M_t\in K[x_0, \ldots , x_m]_1$,  and a form $Q\in K[L_1,L_2]_d$ such that 
$F=Q+M_1^d+\cdots + M_t^d$, $t \le \sbr(F)+\sr(F)-d-2$, and $\sr(F)=\sr(Q)+t$.
Moreover $t$, $M_1,\dots , M_t$ and the linear span of $L_1, L_2$ are 
uniquely determined by $F$. 
\\
\\
In terms of tensors it can be translated as follows:
\\
\\
(\cite{bb1}, Corollary 2) Let $T\in S^dV^*$ be such that
$\sbr(T)+\sr(T)\leq 2d+1$ and $\sbr(T)<\sr(T)$. Then there are an integer $t\ge 0$, vectors $v_1, v_2,w_1, \ldots , 
w_t\in S^1V^*$, and a symmetric tensor $v\in S^d(\langle v_1, v_2 \rangle)$ such that
$T=v+w_1^{\otimes d}+\cdots + w_t^{\otimes d}$, $t\le \sbr(T)+\sr(T)-d-2$, and $\sr(T)=\sr(v)+t$.
Moreover $t$, $w_1,\dots ,w_t$ and   $\langle v_1, v_2\rangle $ are 
uniquely determined by $T$.
\\
\\
The natural questions that arose from applied people at the workshop in Monopoli mentioned above, were about the possible uniqueness of the binary form $Q$ in \cite{bb1}, Corollary 1 (i.e. the vector $v$ in \cite{bb1}, Corollary 2) and a bound on the number $t$ of linear forms (i.e. rank 1 symmetric tensors). We are finally able to give an answer as complete as possible to these questions. 
The main result of the present paper is the following:

\begin{theorem}\label{a1.1}
Let $P\in \PP N$ with $N={m+d\choose d}-1$.
Suppose that:
$$\begin{array}{c}
\sbr(P)<\sr(P) \hbox{ and}
\\
\sbr(P)+\sr(P)\leq 2d+1.
\end{array}$$
Let $\mathcal{S}\subset X_{m,d}$ be a $0$-dimensional reduced
subscheme that realizes the symmetric rank of $P$, and let
$\mathcal{Z}\subset X_{m,d}$ be a $0$-dimensional non-reduced
subscheme such that $P\in \langle \mathcal{Z} \rangle$ and
$\deg{\mathcal{Z}}\leq \sbr(P)$. There is a unique rational normal curve $C_d\subset X_{m,d}$ such
that $\deg(C_d\cap (\mathcal {S}\cup \mathcal {Z}))\ge d+2$. Then for all points $P\in \PP N$ as above we have that:
$$
\mathcal{S}=\mathcal{S}_1\sqcup \mathcal{S}_2,\ \ \ \ 
\mathcal{Z}=\mathcal{Z}_1\sqcup \mathcal{S}_2,
$$
where $\mathcal{S}_1=\mathcal{S}\cap C_d$,
$\mathcal{Z}_1=\mathcal{Z}\cap C_d$ and
$\mathcal{S}_2=(\mathcal{S}\cap \mathcal{Z})\setminus
\mathcal{S}_1$.
\\
Moreover $C_d$, $\mathcal {S}_2$ and $\mathcal {Z}$ are unique, $\deg (\mathcal {Z}) = \sbr (P)$, $\deg (\mathcal {Z}_1)+\deg (\mathcal {S}_1) = d+2$, $\mathcal {Z}_1\cap \mathcal {S}_1
= \emptyset$ and $\mathcal {Z}$ is  the unique zero-dimensional subscheme $N$ of $X_{m,d}$ such that $\deg (N) \le \sbr(P)$ and $P\in \langle N\rangle$.
\end{theorem}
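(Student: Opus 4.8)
The plan is to transport everything to $\mathbb{P}^m$ via $\nu_d^{-1}$ and to detect the rational normal curve as the Veronese image of a line through an $h^1$-computation. Write $s:=\sbr(P)$ and $r:=\sr(P)$, and let $S:=\nu_d^{-1}(\mathcal{S})$ and $Z:=\nu_d^{-1}(\mathcal{Z})$ be the corresponding $0$-dimensional schemes in $\mathbb{P}^m$, so that $\deg S=r$, $\deg Z\le s$, and $P\in\langle\nu_d(S)\rangle\cap\langle\nu_d(Z)\rangle$. Since $\mathcal{Z}$ is non-reduced while $\mathcal{S}$ is reduced we have $S\ne Z$, and the standard lemma relating a non-empty intersection of the two spans to the cohomology of the union gives $h^1(\mathbb{P}^m,\mathcal{I}_{S\cup Z}(d))>0$. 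The only numerical input I will carry forward is $\deg(S\cup Z)\le r+s\le 2d+1$, together with the consequence $s\le d$ of $\sbr(P)<\sr(P)$.

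First I would produce the curve. Because $\deg(S\cup Z)\le 2d+1$ and $h^1(\mathcal{I}_{S\cup Z}(d))>0$, I invoke the classical dichotomy for small schemes: a $0$-dimensional $W$ with $\deg W\le 2d+1$ has $h^1(\mathcal{I}_W(d))>0$ if and only if there is a unique line $L\subset\mathbb{P}^m$ with $\deg(W\cap L)\ge d+2$. Applied to $W=S\cup Z$ this yields a unique $L$, and $C_d:=\nu_d(L)$ is a rational normal curve of degree $d$ with $\deg(C_d\cap(\mathcal{S}\cup\mathcal{Z}))=\deg(L\cap(S\cup Z))\ge d+2$, unique because $L$ is. This settles existence and uniqueness of $C_d$ and the inequality $\deg(\mathcal{Z}_1)+\deg(\mathcal{S}_1)\ge d+2$.

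Next I would split the two schemes along $C_d$ and show they agree off it. Residuating $W$ along $L$ gives $W':=\mathrm{Res}_L(W)$ with $\deg W'\le(r+s)-(d+2)\le d-1$, so the same dichotomy in degree $d-1$ (where a line would need a trace of length $\ge d+1$ that $W'$ is too small to carry) forces $h^1(\mathcal{I}_{W'}(d-1))=0$; that is, the off-curve part imposes independent conditions. Feeding this vanishing into the residual exact sequence $0\to\mathcal{I}_{\mathrm{Res}_L(W)}(d-1)\to\mathcal{I}_W(d)\to\mathcal{I}_{W\cap L,L}(d)\to 0$, and using that $\mathcal{S}$ realizes the rank (no proper subscheme of $S$ spans $P$) while $Z$ has minimal degree, I would argue that the part of $P$ living off $\langle C_d\rangle$ has a unique supporting scheme; hence $\mathrm{Res}_L(S)=\mathrm{Res}_L(Z)$ as subschemes of $\mathbb{P}^m$, both reduced since contained in the reduced $S$. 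Transported back this common scheme is exactly $\mathcal{S}_2=(\mathcal{S}\cap\mathcal{Z})\setminus\mathcal{S}_1$, giving $\mathcal{S}=\mathcal{S}_1\sqcup\mathcal{S}_2$ and $\mathcal{Z}=\mathcal{Z}_1\sqcup\mathcal{S}_2$.

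Finally I would analyze the on-curve parts and collect the uniqueness statements. On $C_d$ the point $P$ reduces to a binary form of degree $d$ whose rank is computed by $\mathcal{S}_1$ and whose border rank is computed by $\mathcal{Z}_1$; since $\sbr<\sr$ persists on the curve, Sylvester's theorem (in the Comas--Seiguer form) gives $\deg\mathcal{S}_1+\deg\mathcal{Z}_1=d+2$ exactly, $\mathcal{S}_1\cap\mathcal{Z}_1=\emptyset$, and uniqueness of the non-reduced border-rank scheme $\mathcal{Z}_1$. Combined with the uniqueness of $\mathcal{S}_2$ this yields $\deg\mathcal{Z}=\deg\mathcal{Z}_1+\deg\mathcal{S}_2=\sbr(P)$ and the uniqueness of $\mathcal{Z}$; for the last assertion I would take an arbitrary $0$-dimensional $N$ with $\deg N\le\sbr(P)$ and $P\in\langle N\rangle$, run the $h^1$-argument on $S\cup N$ and on $Z\cup N$ (both of degree $\le 2d+1$) to force $N$ onto the same line $L$ and the same off-curve scheme $\mathcal{S}_2$, and then invoke the on-curve uniqueness to conclude $N=\mathcal{Z}$. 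I expect the genuine obstacle to be the third step: extracting from the residual vanishing the honest coincidence $\mathrm{Res}_L(S)=\mathrm{Res}_L(Z)$ together with the global uniqueness of $\mathcal{Z}$, where the minimality of $\mathcal{S}$, the minimality of $\mathcal{Z}$, and the rigidity of the binary border-rank scheme must be played against one another and a bare dimension count will not suffice.
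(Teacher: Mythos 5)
Your roadmap coincides with the architecture of the paper's actual proof --- the $h^1$ lemma (\cite{bb1}, Lemma 1) plus the line-detection lemma (\cite{bgi}, Lemma 34) to produce $C_d$, residuation to split everything off the curve, and uniqueness of the binary border-rank scheme on $C_d$ to conclude --- but the step you yourself flag as ``the genuine obstacle'' is precisely the new content, and your proposal does not supply it. The splitting $\mathcal{S}=\mathcal{S}_1\sqcup\mathcal{S}_2$, $\mathcal{Z}=\mathcal{Z}_1\sqcup\mathcal{S}_2$, and, crucially, the fact that an arbitrary competitor $N$ also splits as $\mathcal{Z}'_1\sqcup\mathcal{S}_2$ with the \emph{same} line and the \emph{same} off-curve part, does not follow from the vanishing $h^1(\mathcal{I}_{\mathrm{Res}_L(W)}(d-1))=0$ by itself: the paper obtains it by rerunning steps (b), (c) and (d) of the proof of \cite{bb1}, Theorem 1, with $\mathcal{Z}'$ in place of $\mathcal{Z}$, a nontrivial argument that plays the minimality of $S$ against that of the competitor. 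Your parenthetical ``both reduced since contained in the reduced $S$'' assumes the containment $\mathrm{Res}_L(Z)\subset S$, which is part of what has to be proved, and ``force $N$ onto the same line $L$'' is likewise unjustified as stated --- the dichotomy applied to $S\cup N$ produces \emph{a} line, but identifying it with $L$ is again the content of those steps, not of the dichotomy.

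The second missing ingredient is the reduction to an honest binary point. To say that ``$P$ reduces to a binary form whose rank is computed by $\mathcal{S}_1$ and whose border rank is computed by $\mathcal{Z}_1$'' you need: (i) $\langle C_d\rangle\cap\langle \mathcal{S}_2\rangle=\emptyset$, which the paper isolates as Lemma \ref{oo2} and proves via the residual sequence of $\ell\cup E$ with respect to a general hyperplane through $\ell$; (ii) that the projected points $\langle\{P\}\cup\mathcal{S}_2\rangle\cap\langle\mathcal{S}_1\rangle$, $\langle\mathcal{Z}_1\rangle\cap\langle\{P\}\cup\mathcal{S}_2\rangle$ and $\langle\mathcal{Z}'_1\rangle\cap\langle\{P\}\cup\mathcal{S}_2\rangle$ are one and the same point $P_1$, which uses that $\langle C_d\rangle\cap\langle\{P\}\cup\mathcal{S}_2\rangle$ is at most a single point; and (iii) that $\mathcal{Z}_1$ genuinely \emph{evinces} $\sbr(P_1)$ --- proved in the paper by taking $E\subseteq\mathcal{Z}_1$ with $P_1\in\langle E\rangle$ and using minimality of $\mathcal{Z}$ to force $E=\mathcal{Z}_1$ --- together with the bound $\sbr(P_1)<(d+2)/2$, since the uniqueness statement you invoke as ``Sylvester in the Comas--Seiguer form'' is \cite{ik}, Proposition 1.36, and applies only under that hypothesis. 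Without (i)--(iii) the on-curve uniqueness cannot even be formulated, so while your sketch is an accurate map of the terrain, the decisive lemmas are exactly the ones left blank.
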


In the language of polynomials, Theorem \ref{a1.1} can be rephrased as follows.

\begin{corollary}\label{corpoly.1} Let $F\in K[x_0, \ldots , x_m]_d$ be such that
$\sbr(F)+\sr(F)\leq 2d+1$ and $\sbr(F)<\sr(F)$. Then there are an integer $0\leq t\leq (d-1)/2 $, linear forms $L_1, L_2,M_1, \ldots , 
M_t\in K[x_0, \ldots , x_m]_1$,  and a form $Q\in K[L_1,L_2]_d$ such that 
$F=Q+M_1^d+\cdots + M_t^d$, $t \le \sbr(F)+\sr(F)-d-2$, and $\sr(F)=\sr(Q)+t$.
\\
Moreover the line $\langle L_1,L_2\rangle $, the forms $M_1,\dots , M_t$ and $Q$ such that $Q=\sum_{i=1}^q l_i^{d-d_i}m_i$ with $l_i$'s linear forms and $m_i$'s forms of degree $d_i$ such that $\sum (d_i+1)=s-t$, are 
uniquely determined by $F$.
\end{corollary}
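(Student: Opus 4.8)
The plan is to read off Corollary~\ref{corpoly.1} from Theorem~\ref{a1.1} through the standard dictionary between symmetric tensors and the Veronese variety, adding only two genuinely new pieces of information beyond \cite{bb1}, Corollary~1: the exact value of $t$ (which sharpens the bound $t\le \sbr(F)+\sr(F)-d-2$ to $t\le (d-1)/2$) and the apolarity description of the binary form $Q$. Throughout I set $P=[F]\in\PP N$, $r:=\sr(F)$ and $s:=\sbr(F)$, and I fix the reduced scheme $\mathcal S$ realizing $\sr(F)$ and the degree-$\le s$ scheme $\mathcal Z$ realizing $\sbr(F)$; since $\sbr(F)<\sr(F)$ the scheme $\mathcal Z$ is necessarily non-reduced. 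Points of $X_{m,d}$ are the classes $[L^d]$ of $d$-th powers, and a rational normal curve $C_d\subset X_{m,d}$ is exactly $\nu_d(\ell)$ for a line $\ell=\mathbb P(\langle L_1,L_2\rangle)\subset\PP m$, so that $\langle C_d\rangle$ is the projectivization of the space $K[L_1,L_2]_d$ of binary forms.

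First I would apply Theorem~\ref{a1.1} to obtain the unique curve $C_d=\nu_d(\ell)$ and the decompositions $\mathcal S=\mathcal S_1\sqcup\mathcal S_2$, $\mathcal Z=\mathcal Z_1\sqcup\mathcal S_2$, together with the uniqueness of $C_d$, $\mathcal S_2$ and $\mathcal Z$. The $t:=\deg\mathcal S_2$ reduced points of $\mathcal S_2$ give the linear forms $M_1,\dots,M_t$ (with $M_i^d$ the corresponding $d$-th powers), while the reduced points of $\mathcal S_1=\mathcal S\cap C_d$ lie on $C_d$ and hence are powers of forms in $\langle L_1,L_2\rangle$; their sum is a binary form $Q\in K[L_1,L_2]_d$, and one checks that $F=Q+M_1^d+\cdots+M_t^d$. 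The equality $\sr(F)=\sr(Q)+t$ follows from minimality of $\mathcal S$ (it is also already recorded in \cite{bb1}, Corollary~1): a decomposition of $Q$ shorter than $\deg\mathcal S_1=r-t$ would, after adding back the $M_i^d$, contradict $\sr(F)=r$. Uniqueness is then immediate: $\langle L_1,L_2\rangle$ is unique because $C_d$ is, the forms $M_1,\dots,M_t$ are unique because $\mathcal S_2$ is, and $Q=F-\sum_i M_i^d$ is therefore forced, which answers the question on the uniqueness of $Q$.

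To pin down $t$ I would use the degree relations of Theorem~\ref{a1.1}. From $\deg\mathcal Z=s$ and $\mathcal Z=\mathcal Z_1\sqcup\mathcal S_2$ we get $\deg\mathcal Z_1=s-t$, and from $\deg\mathcal S=r$ and $\mathcal S=\mathcal S_1\sqcup\mathcal S_2$ we get $\deg\mathcal S_1=r-t$; substituting into $\deg\mathcal Z_1+\deg\mathcal S_1=d+2$ yields $2t=s+r-d-2$, i.e. $t=\frac{1}{2}(\sbr(F)+\sr(F)-d-2)$. Combined with the hypothesis $\sbr(F)+\sr(F)\le 2d+1$ this gives exactly $t\le (d-1)/2$, and $t\ge 0$ makes the weaker bound $t\le \sbr(F)+\sr(F)-d-2$ automatic.

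The remaining, and most delicate, step is the structure of $Q$. Here I would work on the line $\ell\cong\mathbb P^1$, where $\mathcal Z_1\subset C_d$ corresponds to a zero-dimensional scheme of degree $s-t$ on $\mathbb P^1$, necessarily a disjoint union of fat points $(d_i+1)[q_i]$ with $\sum_{i=1}^q(d_i+1)=\deg\mathcal Z_1=s-t$; one must first verify that $\mathcal Z_1$ is minimal for $Q$, so that $s-t=\sbr(Q)$, which again comes from the minimality and uniqueness of $\mathcal Z$. The span on the rational normal curve of the length-$(d_i+1)$ subscheme supported at $q_i=[l_i]$ is precisely $l_i^{\,d-d_i}\cdot K[L_1,L_2]_{d_i}$, so apolarity for binary forms yields $Q=\sum_{i=1}^q l_i^{\,d-d_i}m_i$ with $\deg m_i=d_i$, the points $q_i$ and multiplicities $d_i+1$ being determined by the unique scheme $\mathcal Z_1$. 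The main obstacle I anticipate is exactly this last translation: one needs that a length-$b$ subscheme of $C_d$ with $b\le d+1$ spans a $\PP{b-1}$ (here $b=s-t\le s\le d$, since $2s<s+r\le 2d+1$) and the classical apolarity description of the binary forms lying in the span of such a scheme, in order to be sure that the fat-point structure of $\mathcal Z_1$ matches the stated shape of $Q$ and that $\sum(d_i+1)=s-t$.
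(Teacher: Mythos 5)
Your proposal is correct and follows essentially the same route as the paper, which gives no separate proof of Corollary \ref{corpoly.1} but presents it as a direct rephrasing of Theorem \ref{a1.1} (with the existence statements and $\sr(F)=\sr(Q)+t$ imported from \cite{bb1}, Corollary 1). Your two additions --- deriving $2t=\sbr(F)+\sr(F)-d-2$, hence $t\le (d-1)/2$, from $\deg (\mathcal {Z}_1)+\deg (\mathcal {S}_1)=d+2$ together with $\deg (\mathcal {Z})=\sbr(F)$, and reading off the shape $Q=\sum_{i} l_i^{d-d_i}m_i$ from the unique divisor $\mathcal {Z}_1$ on $C_d$ via the osculating-space description of spans of subschemes of a rational normal curve --- are exactly the intended translation, consistent with the canonical-form argument (\cite{ik}, Proposition 1.36) invoked at the end of the paper's proof of Theorem \ref{a1.1}.
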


An analogous corollary can be stated for symmetric tensors.

\begin{corollary}\label{cortens.1} Let $T\in S^dV^*$ be such that
$\sbr(T)+\sr(T)\leq 2d+1$ and $\sbr(T)<\sr(T)$. Then there are an integer  $0\leq t\leq (d-1)/2 $, vectors $v_1, v_2,w_1, \ldots , 
w_t\in S^1V^*$, and a symmetric tensor $v\in S^d(\langle v_1, v_2 \rangle)$ such that
$T=v+w_1^{\otimes d}+\cdots + w_t^{\otimes d}$, $t\le \sbr(T)+\sr(T)-d-2$, and $\sr(T)=\sr(v)+t$.
Moreover the line $\langle v_1,v_2\rangle $, the vectors $v_1,\dots , v_t$ and the tensor $v$ such that $v=\sum_{i=1}^q u_i^{\otimes (d-d_i)}\otimes z_i$ with $u_i\in \langle v_1, v_2 \rangle$ and $z_i \in S^{d_i}(\langle v_1, v_2 \rangle)$ such that $\sum (d_i+1)=s-t$, are 
uniquely determined by $T$.
\end{corollary}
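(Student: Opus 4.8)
The plan is to derive Corollary \ref{cortens.1} from its polynomial counterpart, Corollary \ref{corpoly.1}, by transporting the entire statement through the canonical isomorphism between symmetric tensors and homogeneous polynomials. Having fixed a basis $x_0,\dots ,x_m$ of $V^*$, there is an isomorphism of graded algebras $\phi\colon S^\bullet V^*\to K[x_0,\dots ,x_m]$ whose degree-$d$ piece $\phi_d\colon S^dV^*\to K[x_0,\dots ,x_m]_d$ and degree-$1$ piece $\phi_1\colon S^1V^*\to K[x_0,\dots ,x_m]_1$ are the relevant restrictions. The key feature I would exploit is that, up to the standard identification, $\phi$ sends a rank-one symmetric tensor $w^{\otimes d}$ to the $d$-th power $\phi_1(w)^d$; hence it carries the affine cone over $X_{m,d}$ (the locus of such tensors) isomorphically onto the cone over the variety of $d$-th powers of linear forms. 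Since a graded algebra isomorphism preserves linear spans and therefore all secant varieties, it follows that $\sr(\phi_d(T))=\sr(T)$ and $\sbr(\phi_d(T))=\sbr(T)$, and likewise the integer $s=\sbr(T)=\sbr(\phi_d(T))$ appearing in the numerical conditions is preserved.

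Granting this, I would set $F:=\phi_d(T)$. The hypotheses $\sbr(T)<\sr(T)$ and $\sbr(T)+\sr(T)\le 2d+1$ become exactly the hypotheses of Corollary \ref{corpoly.1} for $F$, so I may apply it to obtain an integer $0\le t\le (d-1)/2$, linear forms $L_1,L_2,M_1,\dots ,M_t$ and a binary form $Q\in K[L_1,L_2]_d$ with $F=Q+M_1^d+\cdots +M_t^d$, $t\le \sbr(F)+\sr(F)-d-2$, and $\sr(F)=\sr(Q)+t$, together with the uniqueness of the line $\langle L_1,L_2\rangle$, of the $M_i$, and of $Q$, and the structural formula $Q=\sum_{i=1}^q l_i^{d-d_i}m_i$ with $\sum(d_i+1)=s-t$. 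I then pull everything back through $\phi^{-1}$, putting $v_j:=\phi_1^{-1}(L_j)$, $w_i:=\phi_1^{-1}(M_i)$ and $v:=\phi_d^{-1}(Q)$. Because $\phi$ is an algebra isomorphism, $\phi_d^{-1}(M_i^d)=w_i^{\otimes d}$ and $\phi_d^{-1}(K[L_1,L_2]_d)=S^d(\langle v_1,v_2\rangle)$, so the decomposition of $F$ transports to
$$T=v+w_1^{\otimes d}+\cdots +w_t^{\otimes d},\qquad v\in S^d(\langle v_1,v_2\rangle),$$
with the same bound $t\le\sbr(T)+\sr(T)-d-2\le (d-1)/2$ and the identity $\sr(T)=\sr(v)+t$.

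Finally, the uniqueness assertions transfer verbatim: since $\phi$ is a bijection respecting spans, rank-one tensors and symmetric subtensors, the line $\langle v_1,v_2\rangle=\phi_1^{-1}(\langle L_1,L_2\rangle)$, the vectors $w_i$ and the tensor $v$ are uniquely determined by $T$ precisely because their images are uniquely determined by $F$ in Corollary \ref{corpoly.1}. The only point requiring a short verification — and the one I expect to be the sole mild obstacle — is the translation of the partial-symmetrization formula: one must confirm that $\phi$ sends the product $l_i^{d-d_i}m_i$ to the symmetric product $u_i^{\otimes(d-d_i)}\otimes z_i$, where $u_i:=\phi_1^{-1}(l_i)\in\langle v_1,v_2\rangle$ and $z_i:=\phi_{d_i}^{-1}(m_i)\in S^{d_i}(\langle v_1,v_2\rangle)$. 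This is immediate once one recalls that multiplication in $S^\bullet V^*$ is the symmetric product and that $\phi$ is defined to intertwine it with polynomial multiplication; the numerical condition $\sum(d_i+1)=s-t$ is thereby unaffected. This completes the reduction of Corollary \ref{cortens.1} to Corollary \ref{corpoly.1}.
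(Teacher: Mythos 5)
Your proposal is correct and coincides with the paper's own (implicit) argument: the paper gives no separate proof of Corollary~\ref{cortens.1}, presenting it simply as the ``analogous'' tensor version of Corollary~\ref{corpoly.1}, which is exactly the transport through the characteristic-zero identification of $S^dV^*$ with $K[x_0,\ldots ,x_m]_d$ that you carry out. Your explicit checks --- that the identification maps $w^{\otimes d}$ to $\phi_1(w)^d$, hence preserves the Veronese cone, all spans, $\sr$ and $\sbr$, and that it matches the partial symmetrization $l_i^{d-d_i}m_i$ with $u_i^{\otimes (d-d_i)}\otimes z_i$ --- are precisely the routine verifications the authors left implicit.
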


Moreover,  by introducing the notion of linearly general position of a scheme (Definition \ref{oo1}), we can  perform a finer geometric description of the condition for the uniqueness of the scheme $\mathcal{Z}$ of Theorem \ref{a1.1}. This is the main purpose of Theorem \ref{f2} and Corollary \ref{aaa}.
In terms of homogeneous polynomials and symmetric tensors, they can be phrased as follows:

\begin{corollary}\label{aaaP}
Fix integers $m \ge 2$ and $d\ge 4$. Fix a degree $d$ homogeneous polynomial $F$ in $m+1$ variables (resp. $T\in S^d V$) such that $\sbr(F)\le d$ (resp. $\sbr(T)\le d$). Let $Z\subset \mathbb {P}^m$
be any smoothable zero-dimensional scheme such that $\nu _d(Z)$ evinces $\sbr (F)$ (resp. $\sbr (T)$). Assume that $Z$ is in linearly general position.
Then
$Z$ is the unique scheme which evinces  $\sbr (F)$ (resp. $\sbr (T)$).
\end{corollary}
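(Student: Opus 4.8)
The plan is to argue by contradiction. Let $P\in\PP N$ be the point corresponding to $F$ and set $b:=\sbr(P)=\deg(Z)\le d$. Suppose there were a second zero-dimensional scheme $W\subset\mathbb{P}^m$, with $W\ne Z$, $\deg(W)=b$ and $P\in\langle\nu_d(W)\rangle$ (i.e.\ $W$ also evinces the border rank). First I would set up the overlap: since $Z\ne W$ and $\deg(Z)=\deg(W)=b$, the scheme $Z\cap W$ is a \emph{proper} subscheme of $Z$, so by the minimality inherent in the border rank (a proper subscheme whose Veronese span still contained $P$ would realize a smaller border rank, using here that $Z$ is smoothable) we get $P\notin\langle\nu_d(Z\cap W)\rangle$.

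Next I would invoke two standard tools used throughout this circle of ideas. The first (the span/apolarity lemma behind Theorem \ref{a1.1} and \cite{bb1}): if $P\in\langle\nu_d(Z)\rangle\cap\langle\nu_d(W)\rangle$ but $P\notin\langle\nu_d(Z\cap W)\rangle$, then $h^1(\mathbb{P}^m,\mathcal{I}_{Z\cup W}(d))>0$. The second (a Castelnuovo-type lemma): a zero-dimensional $Y$ with $\deg(Y)\le 2d+1$ and $h^1(\mathcal{I}_Y(d))>0$ must have a line $L$ with $\deg(Y\cap L)\ge d+2$. Applying the first gives $h^1(\mathcal{I}_{Z\cup W}(d))>0$, and since $\deg(Z\cup W)\le 2b\le 2d\le 2d+1$, the second produces a line $L$ with $\deg((Z\cup W)\cap L)\ge d+2$. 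Because $m\ge2$ and $Z$ is in linearly general position, a length-$3$ subscheme of $Z$ would have to span a $\mathbb{P}^2$, so $\deg(Z\cap L)\le 2$. From $(Z\cup W)\cap L=(Z\cap L)\cup(W\cap L)$ I then read off the chain $d\le\deg(W\cap L)\le\deg(W)=b\le d$, which forces all terms equal: $b=d$ and $W\subseteq L$.

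It remains to rule out this last configuration, where the competitor $W$ has collapsed onto a line. Since $W\subseteq L$, we have $\langle\nu_d(W)\rangle\subseteq\langle\nu_d(L)\rangle=\langle C_d\rangle\cong\mathbb{P}^d$, where $C_d=\nu_d(L)$ is a rational normal curve of degree $d$; hence $P\in\langle C_d\rangle$. But the secant varieties of a rational normal curve are non-defective, so $\sigma_s(C_d)=\langle C_d\rangle$ already for $s=\lceil(d+1)/2\rceil$, and $C_d\subseteq X_{m,d}$ then gives $\sbr(P)\le\lceil(d+1)/2\rceil<d$ because $d\ge4$. This contradicts $\sbr(P)=b=d$. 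Therefore no scheme $W\ne Z$ can evince $\sbr(P)$, and $Z$ is the unique such scheme.

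The hard part is precisely this final \emph{binary-form corner}: the linearly general position hypothesis is what confines $\deg(Z\cap L)$ to at most $2$ and thereby squeezes the competitor onto a line, after which non-defectivity of the rational normal curve secants delivers the contradiction; this is where $d\ge4$ and $m\ge2$ are genuinely used. A secondary point that will need care is the smoothability input in the minimality step used to conclude $P\notin\langle\nu_d(Z\cap W)\rangle$: I would check that the low-degree subschemes arising here (degree $\le d$ and inherited from a scheme in linearly general position) are smoothable, so that the comparison of border ranks through $Z\cap W$ is legitimate.
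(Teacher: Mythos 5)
Your argument is, in substance, the paper's own: Corollary \ref{aaaP} is just the polynomial/tensor restatement of Corollary \ref{aaa}, which follows from Theorem \ref{f2}, and your proof reproduces the proof of that theorem step for step --- positivity of $h^1(\mathcal {I}_{Z\cup W}(d))$ (\cite{bb1}, Lemma 1), the Castelnuovo-type lemma (\cite{bgi}, Lemma 34) producing a line $D$ with $\deg (D\cap (Z\cup W))\ge d+2$, the linearly general position bound $\deg (Z\cap D)\le 2$ squeezing the competitor onto $D$ with $\deg (W)=d$, and non-defectivity of the secant varieties of the rational normal curve (\cite{a}, Remark 1.6) giving $\sbr (P)\le \lfloor (d+2)/2\rfloor <d$, the desired contradiction; your $\lceil (d+1)/2\rceil$ is the same bound as the paper's $\lfloor (d+2)/2\rfloor$, and your numerics ($\deg (Z\cup W)\le 2d+1$, $m\ge 2$, $d\ge 4$) check out.

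The one divergence is the step you flag yourself: you obtain $h^1>0$ by first proving $P\notin \langle \nu _d(Z\cap W)\rangle$ via border-rank minimality, which requires $Z\cap W$ to be smoothable. That check cannot be waved through: subschemes of smoothable zero-dimensional schemes need not be smoothable (every Artinian local algebra is a quotient of an Artinian complete intersection, which is smoothable, so even the non-smoothable degree-$8$ schemes in $\mathbb {P}^4$ sit inside smoothable ones), and linearly general position does not force curvilinearity, which is how the analogous issue is handled in the proof of Theorem \ref{a1.1}. The paper sidesteps this entirely by building span-minimality ($P\notin \langle \nu _d(\overline{Z})\rangle$ for every $\overline{Z}\subsetneq Z$, and likewise for the competitor) into the hypotheses of Theorem \ref{f2} and invoking \cite{bb1}, Lemma 1 in exactly that form, with no smoothability of intersections needed. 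The corresponding repair of your proof is to replace $Z$ and $W$ by subschemes $Z_0\subseteq Z$, $W_0\subseteq W$ minimal with respect to the condition $P\in \langle \nu _d(\cdot )\rangle$ (minimality then holds by construction, and $Z_0$ inherits linearly general position), and to run your line-plus-rational-normal-curve argument on $Z_0\cup W_0$. Be aware, though, that the final identification $Z_0=Z$, $W_0=W$ --- i.e., that a scheme evincing $\sbr$ is automatically span-minimal --- is the very same smoothability point in disguise, and the paper itself leaves it implicit when deducing Corollary \ref{aaa} from Theorem \ref{f2}; so, once you route the $h^1$ step through minimal subschemes rather than through $Z\cap W$, your proof is correct at the same level of rigor as the paper's.
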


\section{Proofs}

The existence of a  scheme $\mathcal{Z}$ as in Theorem \ref{a1.1} was known from 
\cite{bgi} and \cite{bgl} (see Remark 1 of \cite{bb1}).

\begin{lemma}\label{oo2}
Fix integers $m \ge 2$ and $d\ge 2$, a line $\ell \subset \mathbb {P}^m$ and any finite set $E\subset \mathbb {P}^m\setminus \ell$ such that $\sharp (E) \le d$. Then $\dim (\langle \nu _d(E )\rangle)
= \sharp (E)-1$ and $\langle \nu _d(\ell )\rangle \cap \langle \nu _d(E)\rangle =\emptyset$.
\end{lemma}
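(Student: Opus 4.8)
The plan is to translate both assertions into statements about the conditions that the relevant subschemes impose on degree $d$ forms, and then to exhibit explicit separating forms built as products of linear forms.

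For the first assertion I would use the standard dictionary: the points $\nu_d(p_1),\dots,\nu_d(p_k)$ are linearly independent in $\mathbb{P}^N$ exactly when $E=\{p_1,\dots,p_k\}$ imposes independent conditions on $H^0(\mathcal{O}_{\mathbb{P}^m}(d))$, i.e.\ when for each $i$ there is a degree $d$ form vanishing at every $p_j$ with $j\ne i$ but not at $p_i$. With $k=\sharp(E)\le d$, fix $p_i$ and for each $j\ne i$ choose a linear form $\ell_j$ with $\ell_j(p_j)=0$ and $\ell_j(p_i)\ne 0$ (possible as $p_i\ne p_j$); then $\prod_{j\ne i}\ell_j$ has degree $k-1\le d-1$, vanishes on $E\setminus\{p_i\}$ and not at $p_i$, and can be completed to degree exactly $d$ by multiplying by linear forms not through $p_i$. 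This yields $\dim\langle\nu_d(E)\rangle=\sharp(E)-1$.

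For the second assertion I would compute $\dim\langle\nu_d(\ell\cup E)\rangle$ through the ideal sheaf. Since the restriction $H^0(\mathcal{O}_{\mathbb{P}^m}(d))\to H^0(\mathcal{O}_\ell(d))$ is surjective onto a space of dimension $d+1$, we get $h^0(\mathcal{I}_\ell(d))=N-d$ (so in particular $\dim\langle\nu_d(\ell)\rangle=d$). Because $E\cap\ell=\emptyset$, the sheaf sequence $0\to\mathcal{I}_{\ell\cup E}(d)\to\mathcal{I}_\ell(d)\to\mathcal{O}_E\to 0$ reduces matters to showing that $E$ imposes $\sharp(E)$ independent conditions on forms already vanishing on $\ell$. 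Here the hypotheses $m\ge 2$ and $E\cap\ell=\emptyset$ enter: fixing $p\in E$, I would choose a hyperplane $H\supset\ell$ with $p\notin H$ (such $H$ exists since the hyperplanes through $\ell$ form a $\mathbb{P}^{m-2}$, while those also through $p$ form a proper subspace), and multiply the linear form cutting out $H$ by the forms $\ell_q$ separating $p$ from the other points of $E$. This gives a form of degree $\sharp(E)\le d$ vanishing on $\ell\cup(E\setminus\{p\})$ but not at $p$, again completed to degree $d$. Hence $h^0(\mathcal{I}_{\ell\cup E}(d))=N-d-\sharp(E)$ and $\dim\langle\nu_d(\ell\cup E)\rangle=d+\sharp(E)$.

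I would conclude with the Grassmann formula. Writing $A=\langle\nu_d(\ell)\rangle$ and $B=\langle\nu_d(E)\rangle$, one has $\langle\nu_d(\ell\cup E)\rangle=A\vee B$, so $\dim A+\dim B-\dim(A\cap B)=\dim(A\vee B)$ gives $\dim(A\cap B)=d+(\sharp(E)-1)-(d+\sharp(E))=-1$, i.e.\ $A\cap B=\emptyset$. The only delicate point is the degree bookkeeping: the separating products have degree exactly $\sharp(E)$ (one factor for the line plus one per remaining point of $E$), so the assumption $\sharp(E)\le d$ is precisely what makes them fit inside $|\mathcal{O}(d)|$; I expect this count, together with the existence of a hyperplane $H\supset\ell$ avoiding $p$, to be the step requiring the most care.
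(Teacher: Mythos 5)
Your proof is correct, but it reaches the conclusion by a more hands-on route than the paper. The paper reduces everything to the single cohomological statement $h^1(\mathcal{I}_{\ell \cup E}(d))=0$: since $h^0(\mathcal{O}_{\ell \cup E}(d))=d+1+\sharp (E)$, this vanishing simultaneously gives $\dim \langle \nu_d(E)\rangle = \sharp (E)-1$ and the disjointness of the two spans, and it is proved in three lines via the residual exact sequence $0 \to \mathcal{I}_E(d-1) \to \mathcal{I}_{\ell \cup E}(d) \to \mathcal{I}_{\ell,H}(d) \to 0$ with respect to a general hyperplane $H\supset \ell$, quoting the standard vanishings $h^1(\mathcal{I}_E(d-1))=0$ and $h^1(H,\mathcal{I}_{\ell,H}(d))=0$. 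You instead compute $h^0(\mathcal{I}_{\ell\cup E}(d))$ directly from the sequence $0\to \mathcal{I}_{\ell\cup E}(d)\to \mathcal{I}_\ell(d)\to \mathcal{O}_E\to 0$, proving the surjectivity on global sections by exhibiting explicit separating forms (a linear form cutting a hyperplane $H\supset\ell$ with $p\notin H$, times linear forms separating $p$ from the rest of $E$, padded to degree $d$), and then conclude by the Grassmann formula; your degree count $\sharp(E)\le d$ is exactly where the hypothesis enters, matching the paper's use of it inside $h^1(\mathcal{I}_E(d-1))=0$. What your version buys is self-containedness: your products of linear forms are in effect a proof of the quoted fact that at most $d$ points impose independent conditions in degree $d-1$, and your constructed $H$ is the explicit shadow of the paper's general hyperplane (note that for $m=2$ your $\mathbb{P}^{m-2}$ of hyperplanes through $\ell$ degenerates to the single line $H=\ell$, which still avoids $p$ since $E\cap\ell=\emptyset$, so your existence argument survives this boundary case). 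What the paper's version buys is brevity and robustness: the residual-sequence formalism would work verbatim if $E$ were a non-reduced zero-dimensional scheme, whereas your point-separation argument uses reducedness of $E$ — harmless here, since the lemma only concerns a finite set.
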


\begin{proof}
Since $h^0(\ell \cup E,\mathcal {O}_{\ell \cup E}(d)) = d+1+\sharp (E)$, to get both statements it is sufficient to prove $h^1(\mathcal {I}_{\ell \cup E}(d))=0$. Let $H\subset \mathbb {P}^m$ be a general
hyperplane containing $\ell$. Since $E$ is finite and $H$ is general, we have $H\cap E = \emptyset$. Hence the residual exact sequence of the scheme $\ell \cup E$ with respect to the hyperplane $H$
is the following exact sequence on $\mathbb {P}^m$:
\begin{equation}\label{eqoo1}
0 \to \mathcal {I}_E(d-1) \to \mathcal {I}_{\ell \cup E}(d) \to \mathcal {I}_{\ell ,H}(d) \to 0.
\end{equation}
Since $h^1(\mathcal {I}_E(d-1)) = h^1(H,\mathcal {I}_{\ell ,H}(d)) =0$, we get the lemma.
\end{proof}

{\emph {Proof of Theorem \ref{a1.1}.}} All the statements are contained in \cite{bb1}, Theorem 1, except the uniqueness of $\mathcal{Z}$, the fact that $\deg (\mathcal {Z}_1)+\deg (\mathcal {S}_1) = d+2$ and  $\mathcal {Z}_1\cap \mathcal {S}_1
= \emptyset$. Let $\ell \subset \mathbb {P}^m$ be the line such that $\nu _d(\ell )=C_d$. Take $Z, S, Z_1,S_1, S_2\subset \mathbb {P}^m$, such that $\nu _d(Z) = \mathcal {Z}$, $\nu_d(S)=\mathcal {S}$, $\nu _d(Z_1)=\mathcal {Z}_1$, and $\nu _d(S_i)
=\mathcal {S}_i$ for $i=1,2$. Assume the existence of another subscheme $\mathcal {Z}' \subset X_{m,d}$
such that $P\in \langle  {\mathcal{Z}}'\rangle$ and $\deg (\mathcal {Z}') \le \sbr(P)$. Set $\mathcal {Z}'_1:= \mathcal {Z}'\cap C_d$. The fact that $\mathcal {Z}' = \mathcal {Z}'_1\sqcup S_2$, is actually the proof  of \cite{bb1}, Theorem 1 (parts (b), (c) and (d)). At the end of step (a) (last five lines) of proof  of \cite{bb1}, Theorem 1, there
is a description of the next steps (b), (c) and (d)  needed to prove that $\mathcal{Z} = (\mathcal{Z} \cap C_d)\sqcup \mathcal{S}_2$ for a certain scheme $\mathcal{Z}$. The role played by $\mathcal{Z}$ in  \cite{bb1}, Theorem 1, is the same that $\mathcal{Z}' $ plays  here, hence the same steps (b), (c) and (d) give
$\mathcal{Z}' = \mathcal{Z}'_1\sqcup \mathcal{S}_2$ as we want here  (one just needs to write $\mathcal {Z}'$ instead of $\mathcal {Z}$).

Since $C_d$ is a smooth curve, $\mathcal {Z}_1\cup \mathcal {Z}'_1\subset C_d$, $\mathcal{S}_2\cap C_d=\emptyset$, and $\mathcal {Z}\cup \mathcal {Z}' =(\mathcal {Z}_1\cup \mathcal {Z}'_1) \sqcup \mathcal{S}_2$, the schemes
$\mathcal {Z}$ and $\mathcal {Z}'$ are
curvilinear. Hence all subschemes of $\mathcal {Z}$ and $\mathcal {Z}'$ are smoothable. Hence any subscheme of either $\mathcal {Z}$ or $\mathcal {Z}'$ may be used to compute the border rank of some point
of $\PP N$.
Since $\deg (\ell \cap (Z\cup S)) \ge d+2$, $\nu _d((Z\cup S)\cap \ell)$ spans $\langle C_d\rangle$. Lemma \ref{oo2} implies  $\langle C_d\rangle \cap \langle \mathcal {S}_2\rangle = \emptyset$.
Since $P\in \langle \mathcal {S}_1\cup \mathcal {S}_2\rangle$
and $\sharp (S) =\sr(P)$, we have $P\notin \langle \mathcal {A}\rangle$ for any $\mathcal {A}\subsetneqq \mathcal {S}$. 
Therefore
we get that $\langle \{P\} \cup \mathcal {S}_2\rangle \cap \langle \mathcal {S}_1\rangle$ is a unique point. Call $P_1$ this point. Similarly,
$\langle \mathcal {Z}_1\rangle \cap \langle \mathcal {S}_2\rangle$ is a unique point and we call it $P_2$. Similarly,
$\langle \mathcal {Z}'_1\rangle \cap \langle \mathcal {S}_2\rangle$ is a unique point and we call it $P_3$.
Since $\langle C_d\rangle \cap \langle \mathcal {S}_2\rangle =\emptyset$, the set  $\langle C_d\rangle \cap \langle \{P\} \cup \mathcal {S}_2\rangle$ is at most one point.
Since $P_i\in \langle C_d\rangle \cap \langle \{P\} \cup \mathcal {S}_2\rangle$, $i=1,2,3$, we have $P_1 = P_2=P_3$ and $\{P_1\} = \langle C_d\rangle \cap \langle \{P\} \cup \mathcal {S}_2\rangle$.
Since $P_1 = P_3$, we have $P_1\in \langle \mathcal {Z}'_1\rangle \cap \langle \mathcal {S}_1\rangle$. Take any $E\subseteq \mathcal {Z}_1$ such that $P_1\in \langle E\rangle$.
Since $P\in \langle \{P_1\}\cup \mathcal {S}_2\rangle \subseteq \langle E\cup \mathcal {S}_2\rangle$ and $P\notin \langle \mathcal {U}\rangle$ for any $\mathcal {U}\subsetneq \mathcal {Z}$, we get $E\cup \mathcal {S}_2=\mathcal {Z}$. Hence $E = \mathcal {Z}_1$.  Therefore
$\mathcal {Z}_1$ computes $\sbr(P_1)$ with respect to $C_d$. Similarly,
$\mathcal {Z}'_1$ computes $\sbr (P_2)$ with respect to the same rational normal curve $C_d$. For any $Q\in \langle C_d\rangle$ with $\sbr (Q) < (d+2)/2$ (equivalently $\sbr (Q) \ne (d+2)/2$), there
is a unique zero-dimensional subscheme of $\langle C_d\rangle$
which evinces $\sbr (Q)$ (\cite{ik}, Proposition 1.36; in \cite{ik}, Definition
1.37, this scheme is called
the canonical form of the polynomial associated to $P$). Since
$P_1=P_2$, we have $\mathcal {Z}'_1 = \mathcal {Z}_1$.
\qed

\begin{definition}\label{oo1}
A scheme $Z\subset \mathbb {P}^m$ is said to be in {\it linearly general position} if for every linear subspace $R\subsetneqq \mathbb {P}^m$
we have $\deg (R\cap Z) \le \dim (R)+1$.
\end{definition}

Notice that the next theorem is false if either $d=2$ or $m=1$. Moreover if $d=3$ and $m>1$, then it essentially says that a point in the tangential variety of a Veronese variety belongs to a unique tangent line. This is a consequence of the  well known Sylvester's theorem on the decompositions of binary forms (\cite{bgi}, \cite{lt}).

\begin{theorem}\label{f2}
Fix integers $m \ge 2$ and $d\ge 4$. Fix $P\in \mathbb {P}^N$. Let $Z\subset \mathbb {P}^m$
be any smoothable zero-dimensional scheme such that $P\in \langle \nu _d(Z)\rangle$ and $P\notin \langle \nu _d(\overline{Z})\rangle$ for any $\overline{Z} \subsetneq Z$. 
Assume $\deg (Z) \le d$ and that $Z$ is in linearly general position.
Then
$Z$ is the unique scheme $Z' \subset \mathbb {P}^m$ such that $\deg (Z') \le d$ and $P\in \langle \nu
_d(Z')\rangle$. Moreover $\nu _d(Z)$ evinces $\sbr (P)$.\end{theorem}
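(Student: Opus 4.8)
My plan is to pit $Z$ against a minimal competitor through the union scheme $W=Z\cup Z'$ and a single $h^1$-dichotomy, in the spirit of the proof of Theorem~\ref{a1.1} but now driven by the linearly general position hypothesis. I would let $Z'$ be a scheme evincing $\sbr(P)$ --- the minimal-degree competitor, which is what the uniqueness assertion and the ``Moreover'' clause are really about --- so that $\deg(Z')=\sbr(P)\le\deg(Z)\le d$, $P\in\langle\nu_d(Z')\rangle$, and, by minimality of the degree, $P\notin\langle\nu_d(\overline{Z'})\rangle$ for every $\overline{Z'}\subsetneq Z'$ (the same irredundancy that $Z$ enjoys by hypothesis). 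Since $\deg(W)\le\deg(Z)+\deg(Z')\le 2d\le 2d+1$, the whole argument turns on whether $h^1(\mathbb{P}^m,\mathcal{I}_W(d))$ vanishes, and I would treat the two cases separately; proving $Z=Z'$ simultaneously gives $\deg(Z)=\sbr(P)$, i.e. that $\nu_d(Z)$ evinces $\sbr(P)$, and the asserted uniqueness.

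If $h^1(\mathcal{I}_W(d))=0$, then $\nu_d(W)$ is linearly independent, and for the subschemes $Z,Z'\subseteq W$ this yields the span identity $\langle\nu_d(Z)\rangle\cap\langle\nu_d(Z')\rangle=\langle\nu_d(Z\cap Z')\rangle$, read off from the Mayer--Vietoris sequence tying $Z$, $Z'$, $Z\cap Z'$ and $W$ together once $h^1$ vanishes. As $P$ lies in both spans, $P\in\langle\nu_d(Z\cap Z')\rangle$; irredundancy of $Z$ forces $Z\cap Z'=Z$, irredundancy of $Z'$ forces $Z\cap Z'=Z'$, and hence $Z=Z'$. This is the case I expect to be routine.

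The real work is to exclude $h^1(\mathcal{I}_W(d))>0$. Here I would quote the standard Castelnuovo-type lemma for zero-dimensional schemes of degree $\le 2d+1$: positivity of $h^1$ forces a line $L\subset\mathbb{P}^m$ with $\deg(L\cap W)\ge d+2$. At this point the hypothesis on $Z$ enters decisively: Definition~\ref{oo1} applied to $L$ gives $\deg(L\cap Z)\le\dim(L)+1=2$, so $\deg(L\cap Z')\ge(d+2)-2=d\ge\deg(Z')$, which forces $Z'\subseteq L$ and $\deg(Z')=d$. Then $P\in\langle\nu_d(Z')\rangle\subseteq\langle\nu_d(L)\rangle$, i.e. $P$ lies on the span of the rational normal curve $C_d=\nu_d(L)$ (the separation of the off-$L$ part of $Z$ from $\langle C_d\rangle$ being the scheme-level analogue of Lemma~\ref{oo2} used in Theorem~\ref{a1.1}). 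But every point of $\langle C_d\rangle$ has border rank at most $\lceil(d+1)/2\rceil$ with respect to $C_d$, the maximal border rank of a binary form of degree $d$, so $\sbr(P)\le\sbr_{C_d}(P)\le\lceil(d+1)/2\rceil<d$ for $d\ge4$; this contradicts $\deg(Z')=\sbr(P)=d$, and the case is impossible.

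The step I expect to be the genuine obstacle is precisely this line case: it is where the naive union bound fails and one is thrown onto Sylvester's theory of binary forms, so that the sharp bound on the border rank of binary forms (and, for the finer pinning-down of the scheme, the uniqueness of the apolar scheme on $C_d$, valid because $\sbr_{C_d}(P)<(d+2)/2$; see \cite{ik}, Proposition~1.36, and \cite{bgi}, \cite{lt}) must be combined with the linearly general position of $Z$ to forbid the competitor from collapsing onto a rational normal curve. Two ancillary points I would verify in passing --- the span identity used in the vanishing case and the admissibility of Sylvester's bounds on $C_d$ --- are standard in the range $m\ge2$, $d\ge4$; note that the exclusion of $d\le 3$ and $m=1$ recorded before the statement is exactly what makes the strict inequality $\lceil(d+1)/2\rceil<d$ and the separation in Lemma~\ref{oo2} available.
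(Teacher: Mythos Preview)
Your proof is correct and follows essentially the same route as the paper's: the paper cites \cite{bb1}, Lemma~1, to assert $h^1(\mathcal{I}_{Z\cup Z'}(d))>0$ directly (which is exactly your $h^1=0$ span-identity argument packaged as a lemma), then uses \cite{bgi}, Lemma~34, to produce the line $D$, applies linearly general position to force $Z'\subset D$ with $\deg(Z')=d$, and finishes with the Sylvester bound $\sbr(P)\le\lfloor(d+2)/2\rfloor<d$ for $d\ge 4$. Your explicit choice of $Z'$ as a scheme evincing $\sbr(P)$ matches what the paper actually uses at the final contradiction (``we assumed $\deg(Z')\le\sbr(P)$'').
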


\begin{proof} Since $\deg (Z) \le d$ and $Z$ is smoothable, \cite{bgi}, Proposition 11 (last sentence), gives $\sbr (P) \le d$.  Hence there is a scheme
which evinces $\sbr (P)$ (\cite{bb1}, Remark 3).
The existence of such a scheme follows from \cite{bb1}, Remark 1, and the inequality $\sbr(P)\le d$.
Fix any scheme $Z' \subset \mathbb {P}^m$ such that $Z'\ne Z$, $\deg (Z') \le d$, $P\in \langle \nu _d(Z')\rangle$, and $P\notin \langle \nu
_d(Z'')\rangle$ for any $Z''\subsetneq Z'$. Since $\deg (Z\cup Z') \le 2d+1$ and
$h^1(\mathbb {P}^m,\mathcal {I}_{Z\cup Z'}(d)) >0$ (\cite{bb1}, Lemma 1), there is a line
$D\subset \mathbb {P}^m$ such that $\deg (D\cap (Z\cup Z'))
\ge d+2$ (\cite{bgi}, Lemma 34). Since $Z$ is in linearly general position and $m\ge 2$, we have
$\deg (Z\cap D)\le 2$. Hence $\deg (Z'\cap D)\ge d$. Hence $\deg (Z') = d$.
Since $\deg (Z')=d$, we get
$Z'\subset D$. Hence $P\in \langle \nu _d(D)\rangle$. Hence $\sbr(P) = d$. The secant varieties
of any non-degenerate curve have the expected dimension (\cite{a}, Remark 1.6).
Hence $\sbr (P) \le \lfloor (d+2)/2\rfloor$.  Since $\deg (Z')=d$, we assumed $\deg (Z') \le \sbr (P)$, contradicting the
assumption $d\ge 4$.
\end{proof}

\begin{corollary}\label{aaa}
Fix integers $m \ge 2$ and $d\ge 4$. Fix $P\in \mathbb {P}^N$ such that $\sbr(P)\le d$. Let $Z\subset \mathbb {P}^m$
be any smoothable zero-dimensional scheme such that $\nu _d(Z)$ evinces $\sbr (P)$. Assume that $Z$ is in linearly general position.
Then
$Z$ is the unique scheme which evinces  $\sbr(P)$.
\end{corollary}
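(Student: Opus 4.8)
The plan is to deduce Corollary \ref{aaa} directly from Theorem \ref{f2} by checking that its hypotheses are a special case of those of the theorem. First I would unwind the assertion that $\nu_d(Z)$ evinces $\sbr(P)$: this means $\deg(Z)=\sbr(P)$ and $P\in\langle\nu_d(Z)\rangle$, and together with the standing hypothesis $\sbr(P)\le d$ it gives at once $\deg(Z)\le d$. Since $Z$ is assumed smoothable, zero-dimensional and in linearly general position, the only hypothesis of Theorem \ref{f2} still to be verified is the minimality condition $P\notin\langle\nu_d(\overline{Z})\rangle$ for every $\overline{Z}\subsetneq Z$.

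To establish this minimality I would argue as follows. Because $Z$ is in linearly general position and $\deg(Z)\le d$, the scheme $\nu_d(Z)$ imposes independent conditions on degree-$d$ forms, i.e.\ $h^1(\mathcal{I}_Z(d))=0$ and $\dim\langle\nu_d(Z)\rangle=\deg(Z)-1$; this is a standard $h^1$-vanishing argument of exactly the type used in Lemma \ref{oo2}, run by restriction to a general hyperplane. Linear independence of $\nu_d(Z)$ passes to every subscheme and forces $\langle\nu_d(W_1)\rangle\cap\langle\nu_d(W_2)\rangle=\langle\nu_d(W_1\cap W_2)\rangle$ for all $W_1,W_2\subseteq Z$, so there is a \emph{unique} minimal subscheme $W\subseteq Z$ with $P\in\langle\nu_d(W)\rangle$. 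If $W\subsetneq Z$ then $\deg(W)<\deg(Z)=\sbr(P)$; but $W$ is again in linearly general position, has $\deg(W)\le d$, and satisfies the minimality hypothesis, so Theorem \ref{f2} applied to $W$ would give that $\nu_d(W)$ evinces $\sbr(P)$, that is $\deg(W)=\sbr(P)$, a contradiction. Hence $W=Z$, which is precisely the minimality required to invoke Theorem \ref{f2} for $Z$ itself.

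With all hypotheses in place, Theorem \ref{f2} yields that $Z$ is the unique scheme $Z'$ with $\deg(Z')\le d$ and $P\in\langle\nu_d(Z')\rangle$. To finish, let $W$ be any scheme evincing $\sbr(P)$; then $\deg(W)=\sbr(P)\le d$ and $P\in\langle\nu_d(W)\rangle$, so the uniqueness just obtained forces $W=Z$. This proves that $Z$ is the unique scheme evincing $\sbr(P)$.

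I expect the genuine obstacle to be the minimality step, and more precisely the implicit claim that the minimal representing subscheme $W$ inherits smoothability from $Z$, which is what allows Theorem \ref{f2} to be applied to $W$. Subschemes of smoothable zero-dimensional schemes need not be smoothable in general, so to make the argument airtight I would have to use the degree bound $\deg(Z)\le d$ together with linear general position to guarantee that the relevant proper subschemes are smoothable (equivalently, to rule out a \emph{non}-smoothable proper subscheme of $Z$ whose Veronese span already contains $P$). Everything else is a formal application of Theorem \ref{f2}.
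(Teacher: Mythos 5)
Your reduction is exactly how the paper intends Corollary \ref{aaa} to be read: the corollary is stated immediately after Theorem \ref{f2} with no separate proof, so the entire content of a proof is the verification you carry out that an evincing scheme satisfies the theorem's hypotheses. Two remarks on that verification. First, the $h^1$-vanishing does not need linear general position: any zero-dimensional scheme of degree at most $d+1$ imposes independent conditions on degree-$d$ forms (this is implicit in the paper's use of \cite{bgi}, Lemma 34, where $h^1(\mathcal {I}_{Z\cup Z'}(d))>0$ forces a subscheme of degree $\ge d+2$ on a line), so $\deg (Z)\le d$ already gives linear independence of $\nu _d(Z)$ and the simplex-like behaviour of spans of subschemes. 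Second, your detour of feeding the minimal subscheme $W$ back into Theorem \ref{f2} is superfluous: once $W\subsetneq Z$ spans $P$ and $W$ is smoothable, $\sbr (P)\le \deg (W)<\deg (Z)=\sbr (P)$ contradicts the definition of border rank directly; and since applying Theorem \ref{f2} to $W$ would itself require $W$ smoothable, the detour buys nothing and the whole weight of the minimality step rests on smoothability of $W$ in either version.

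That is precisely the issue you flag at the end, and you are right that it is genuine: ``evinces'' in this paper (following \cite{bb1}) only means smoothable, of degree equal to $\sbr (P)$, with $P$ in the span, so the minimality hypothesis $P\notin \langle \nu _d(\overline{Z})\rangle$ of Theorem \ref{f2} can fail only through a \emph{non-smoothable} proper subscheme of $Z$ whose Veronese span contains $P$. The authors are aware of this trap elsewhere: in the proof of Theorem \ref{a1.1} they secure the analogous minimality by showing $\mathcal {Z}$ and $\mathcal {Z}'$ are curvilinear, ``hence all subschemes of $\mathcal {Z}$ and $\mathcal {Z}'$ are smoothable.'' Linear general position provides no such handle (it does not imply curvilinearity), and the paper is silent on the point for Corollary \ref{aaa}; so your flag exposes a step the paper leaves implicit rather than a defect peculiar to your argument. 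Note that the issue is vacuous for $d\le 8$, since every zero-dimensional scheme of degree at most $7$ is smoothable and here $\deg (W)\le \sbr (P)-1\le d-1$; for larger $d$ an actual argument ruling out such a subscheme is needed, and neither your proposal nor the paper supplies one. With that caveat made explicit, your deduction coincides with the paper's intended proof.
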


\providecommand{\bysame}{\leavevmode\hbox to3em{\hrulefill}\thinspace}

\end{document}